\documentclass[UTF8,12pt,scheme=plain,fontset=none]{ctexart}
\usepackage[T1]{fontenc}
\usepackage[utf8]{inputenc}
\usepackage{lmodern}

\usepackage[a4paper,margin=1in]{geometry}
\usepackage{amsmath,amssymb,amsthm,mathtools,amsfonts}
\usepackage{scalerel,stackengine}
\stackMath
\usepackage{enumitem}
\usepackage{microtype}
\usepackage{xcolor}
\usepackage{hyperref}

\hypersetup{
  colorlinks=true,
  linkcolor=blue,
  citecolor=blue,
  urlcolor=blue,
  pdftitle={Nonexistence of Frame Measures for Separated Uniform Consecutive-Digit Bernoulli Convolutions},
  pdfauthor={Xiao-Ye Fu and Wei-Jie Wang},
  pdfsubject={Fourier frames and Bernoulli convolutions},
  pdfkeywords={Fourier frames, frame measures, Bernoulli convolutions, cyclic characters, mask quotients}
}

\numberwithin{equation}{section}

\AtBeginDocument{%
  \setlength{\abovedisplayskip}{6.5pt plus 2pt minus 2pt}%
  \setlength{\belowdisplayskip}{6.5pt plus 2pt minus 2pt}%
  \setlength{\abovedisplayshortskip}{4pt plus 2pt minus 2pt}%
  \setlength{\belowdisplayshortskip}{4pt plus 2pt minus 2pt}%
}

\theoremstyle{plain}
\newtheorem{theorem}{Theorem}[section]

\newtheorem{lemma}[theorem]{Lemma}
\newtheorem{remark}[theorem]{Remark}
\theoremstyle{definition}
\newtheorem{definition}[theorem]{Definition}

\newcommand{\R}{\mathbb R}
\newcommand{\N}{\mathbb N}
\newcommand{\supp}{\operatorname{supp}}

\newcommand{\e}{\mathrm{e}}
\newcommand{\1}{\mathbf{1}}
\newcommand{\norm}[1]{\lVert #1\rVert}
\newcommand{\abs}[1]{\lvert #1\rvert}
\newcommand{\wh}[1]{\widehat{#1}}
\newcommand{\longwh}[1]{%
  \savestack{\tmpbox}{\stretchto{%
    \scaleto{%
      \scalerel*[\widthof{\ensuremath{#1}}]{\kern-.6pt\bigwedge\kern-.6pt}%
      {\rule[-\textheight/2]{1ex}{\textheight}}%
    }{\textheight}%
  }{0.5ex}}%
  \stackon[1pt]{#1}{\tmpbox}%
}
\newcommand{\dd}{\,\mathrm{d}}
\newcommand{\eps}{\varepsilon}

\title{Nonexistence of Frame Measures for Separated Uniform Consecutive-Digit Bernoulli Convolutions}
\author{Xiao-Ye Fu and Wei-Jie Wang}
\date{}

\begin{document}

\maketitle

\begin{abstract}
We investigate the existence problem of frame measures for uniform consecutive-digit Bernoulli convolutions under the separated scaling condition. Given parameters $N\ge2$ and $0<\rho<1/N$, we prove that if $\rho^{-m}=B\in\N$ for some $m\ge1$ and $N\nmid B$, then the associated $N$-Bernoulli convolution \(\mu_{\rho,N}\) admits no frame measure. The proof introduces a new cyclic mask-quotient obstruction adapted to the multi-character structure of the consecutive-digit framework. 
\end{abstract}

\noindent\textbf{Keywords.} Bernoulli convolutions; Fourier frames; frame measures; self-similar measures; cyclic characters; mask quotients.

\medskip
\noindent\textbf{2020 Mathematics Subject Classification.} 42C15, 42C40, 28A80, 42B10.

\section{Introduction}\label{sec:introduction}

Fourier analysis on singular fractal measures centers on a fundamental problem: to what extent does the classical orthogonal exponential decomposition persist when Lebesgue measure is replaced by a singular fractal measure? Spectrality provides the strongest affirmative answer, corresponding to the existence of an orthonormal basis of exponential functions. Fourier frames relax orthogonality while retaining stable reconstruction; we refer to~\cite{Christensen} for general frame and Riesz basis theory. Frame measures unify discrete Fourier frames and their weighted counterparts: they extend discrete frame inequalities by replacing summation over frequencies with integration against a positive Borel measure on the frequency domain. Standard Fourier frames correspond to atomic frame measures induced by counting measures on discrete frequency sets, while weighted Fourier frames correspond to weighted atomic frame measures. 
Meanwhile, the discretization theorem of Freeman and Speegle~\cite{FreemanSpeegle} asserts that bounded continuous frames can be sampled to yield discrete frames. Within the exponential framework studied in this paper, this sampling theorem permits frequency multiplicities.  Precise definitions for frame measures are recalled in Section~\ref{sec:framework}.

\vspace{0.2cm}

Dutkay, Han and Weber~\cite{DutkayHanWeber} introduced frame measures as a tool to study Fourier frame problems for singular self-similar fractal measures. They proved that a compactly supported Borel measure admits a frame measure if and only if it admits an atomic frame measure, which implies the existence of a weighted Fourier frame. Dutkay and Lai~\cite{DutkayLai} subsequently derived a necessary uniformity condition for measures to admit frame measures and this condition can be applied to exclude Fourier frames for non-overlapping self-affine measures with unequal probability weights. A comprehensive survey of Fourier bases and Fourier frames for self-affine measures can be found in~\cite{DutkayLaiWang}. 

\vspace{0.2cm}

A fundamental distinction separates spectrality from the existence of Fourier frames, as demonstrated in~\cite{LaiWang}: a measure may possess only a sparse family of mutually orthogonal exponentials yet still admit a Fourier frame. The construction in~\cite{LaiWang} produces non-spectral Moran-type measures admitting Fourier frames via almost-Parseval-frame towers. These examples, however, do not cover the homogeneous Bernoulli convolutions considered here. On the spectral side, self-affine and Cantor-type measures have been studied through Hadamard structures, affine invariant systems, tree-based frequency constructions, and the arithmetic of consecutive digit sets~\cite{LabaWang,DutkayJorgensen,DutkayHanSun,DaiHeLai,Dai,DaiHeLau}. The existence of Fourier frames for non-spectral homogeneous self-similar
measures has remained largely unresolved for more than two decades, with
decisive progress only recently obtained for two-branch Bernoulli
convolutions.

Among such self-similar measures, the middle-third Cantor measure provides a canonical test case. Jorgensen and Pedersen~\cite{JorgensenPedersen} established its non-spectrality, and Strichartz~\cite{Strichartz} subsequently raised the natural question of whether the weaker Fourier-frame property might still hold. This long-standing problem was resolved in the negative by two independent recent works~\cite{FuSongWang,PontLiehrTaylor}.

%

\vspace{0.2cm}

The negative conclusion for two-branch Bernoulli convolutions relies crucially on the Walsh-quotient obstruction introduced in~\cite{FuSongWang}, which raises a fundamental structural algebraic question:
 is this frame barrier exclusive to the two-element sign group \(\{-1,1\}\), or does it arise universally from character theory of finite cyclic groups of arbitrary order? In the binary setting, there is only one nontrivial character, so a single quotient suffices for the obstruction argument. For \(N\ge3\), however, the scaling \(t\mapsto Bt\) interacts with several nontrivial characters, which requires a family of character-indexed cyclic mask quotients rather than a direct adaptation of the binary quotient. The present work addresses this question uniformly for general \(N\)-digit Bernoulli convolutions with consecutive digits. Fix an integer \(N\ge 2\), contraction ratio \(0<\rho<1\). Let \(A_1,A_2,\dots\) denote independent identically distributed random variables obtained by uniform sampling from \(\{0,1,\dots,N-1\}\). Define
\begin{equation}\label{eq:intro-series}
	 X_{\rho,N}=\sum_{j=1}^{\infty}A_j\rho^j,
	 \qquad \mu_{\rho,N}=\operatorname{Law}(X_{\rho,N}).
	\end{equation}
Equivalently, \(\mu_{\rho,N}\) is the equal-weight self-similar probability measure generated by the affine iterated maps \(S_a(x)=\rho (x+a)\), \(a\in\{0,1,\dots,N-1\}\).

\vspace{0.2cm}

We focus our investigation on the separated parameter regime where cylinder sets do not overlap, which corresponds to the range \(0<\rho<1/N\).
A sharp geometric transition occurs at the critical threshold \(\rho=1/N\). When \(0<\rho<1/N\), the coding map associated with digit sequences is injective, and coordinate characters of the cyclic group \(\mathbb Z/N\mathbb Z\) induce well-defined functions on the support of \(\mu_{\rho,N}\). For \(\rho>1/N\), cylinder sets overlap, and these character packets fail to yield globally consistent functions without further regularization. We therefore restrict our analysis entirely to the separated parameter regime \(0<\rho<1/N\) and do not treat systems with overlapping cylinders in the present paper.

\vspace{0.2cm}

The spectral problem for the \(N\)-digit Bernoulli convolutions \eqref{eq:intro-series} has been completely settled. Comprehensive spectral classifications for such measures were developed in~\cite{DaiHeLai,Dai,DaiHeLau}, showing that the measure admits an orthonormal exponential basis if and only if \(\rho=1/q\) for some integer \(q\) divisible by \(N\). 
 The remaining difficulty is to exclude the more flexible frame structures in nonspectral cases. Our main theorem gives such an obstruction whenever iterated scaling dynamics produce a nonzero residue modulo $N$ after finitely many iterations.

\begin{theorem}\label{thm:main}
Let $N\ge2$ and $0<\rho<1/N$.  Suppose that $\rho^{-m}=B$ for some integers $m\ge1$, $B\ge2$, and that $N\nmid B$.  Then $\mu_{\rho,N}$ admits no frame measure.
\end{theorem}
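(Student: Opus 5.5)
We argue by contradiction. Suppose $\mu=\mu_{\rho,N}$ admits a frame measure $\nu$ with bounds $0<A\le C<\infty$:
\[
A\norm{f}_{L^2(\mu)}^2\le\int\abs{\langle f,\e_t\rangle_\mu}^2\dd\nu(t)\le C\norm{f}_{L^2(\mu)}^2,\qquad \e_t(x)=\e^{2\pi i tx}.
\]
We use the Dutkay--Han--Weber reduction to replace $\nu$ by an atomic frame measure $\sum_{\lambda\in\Lambda}w_\lambda\delta_\lambda$. The Dutkay--Lai uniformity results then give local bounds $\nu([t,t+1])\le K$, uniformly in $t$. The contradiction will come from an explicit sequence of test functions. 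For these, the lower frame bound forces $\int\abs{\wh{f\mu}}^2\dd\nu\ge A\norm f^2$, while the structure of $\mu$ makes this integral $o(\norm f^2)$.

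\textbf{Test functions.} Separation ($\rho<1/N$) makes the coding map $x=\sum A_j\rho^j\mapsto(A_j)_j$ injective on $\supp\mu$. So for a nontrivial character $\chi_k(a)=\e^{2\pi i ka/N}$ of $\mathbb Z/N\mathbb Z$, the coordinate functions $\chi_k(A_j(x))$ are well defined on $\supp\mu$, have mean zero, and are independent across $j$.

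Write $\mu=\delta_{\rho D}*\delta_{\rho^2D}*\cdots$ with $D=\{0,\dots,N-1\}$. Packets built from these characters at the scales $j,j+m,j+2m,\dots$ have Fourier transforms that factor as a product of masks $m_D(\rho^i t)=\frac1N\sum_a \e^{2\pi i a\rho^i t}$. In the factors where the character is inserted, the mask is replaced by a shifted mask $m_D(\rho^i t-k/N)$.

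The key identity uses $\rho^{-m}=B$: the map $t\mapsto Bt$ permutes these factors. Comparing the packet Fourier transform with $\wh\mu$ gives a \emph{cyclic mask quotient}
\[
Q_k(t)=\frac{m_D(t-k/N)}{m_D(t)}.
\]
Under iteration $t\mapsto Bt$ modulo $1$ the shift $k/N$ becomes $Bk/N$. Since $N\nmid B$, the orbit $k\mapsto Bk\bmod N$ never falls into $0$ when we start at a unit residue, or more carefully at $k$ chosen with $B^\ell k\not\equiv0$ for all $\ell$. Hence every stage contributes a genuinely nontrivial character, and the quotients indexed by $k$ form a family that is closed under multiplication by $B$.

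\textbf{Main estimate.} Take $f_n=\prod_{\ell=1}^{n}\bigl(1+\varepsilon\,\chi_{k_\ell}(A_{j_0+\ell m})\bigr)$, or a Riesz-product variant, normalized in $L^2(\mu)$. By independence, $\norm{f_n}^2=(1+\varepsilon^2)^n$. Its Fourier transform is $\wh\mu(t)$ times a product of $n$ factors $1+\varepsilon Q_{k_\ell}(B^{-\ell}\cdot)$ evaluated along the orbit.

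We then integrate $\abs{\wh{f_n\mu}}^2$ against $\nu$. We split $\R$ into unit intervals and use the upper bound $\nu([t,t+1])\le K$. The aim is to show that the averaged quotient product has a growth rate strictly smaller than $(1+\varepsilon^2)^n$. Concretely, the $B$-periodicity lets us average over the finite cyclic orbit, and the character orthogonality $\sum_{a}\chi_k(a)=0$ yields a strict inequality per block. This gives $\int\abs{\wh{f_n\mu}}^2\dd\nu/\norm{f_n}^2\to0$, contradicting the lower bound $A>0$.

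\textbf{Main obstacle.} The hard step is controlling $Q_k$ near the zeros of $m_D$, namely $t\in\frac1N\mathbb Z\setminus\mathbb Z$, where the quotient blows up. We must show that these singularities are compensated, because $\wh\mu$ vanishes there to matching order along the whole $B$-orbit. We must also obtain a uniform strict contraction constant $<1$ per cyclic block. First we would try to bound $\abs{\wh{f\mu}}^2$ directly as a quadratic form and apply a spectral-radius or transfer-operator argument on the finite set of residues $\{B^\ell k\bmod N\}$. The hypothesis $N\nmid B$ is exactly what keeps this operator away from the trivial character, so that its norm is less than $1$.
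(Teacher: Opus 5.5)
There is a genuine gap. The estimate that carries the whole contradiction, $\int\abs{\widehat{f_n\mu}}^2\dd\nu/\norm{f_n}^2\to0$, is asserted but never proved, and the tools you name cannot prove it.

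\begin{itemize}
\item \textbf{The local-mass bound gives nothing.} Splitting $\R$ into unit intervals and using $\nu([t,t+1])\le K$ only gives $\int\abs{\widehat{f_n\mu}}^2\dd\nu\le K\sum_{j\in\mathbb Z}\sup_{[j,j+1]}\abs{\widehat{f_n\mu}}^2$. The right-hand side is $+\infty$: $f_n\mu$ is a nonzero singular measure, so $\widehat{f_n\mu}\notin L^2(\R)$.
\item \textbf{Pointwise size is not the issue.} One has $\abs{\widehat{f_n\mu}(\lambda)}^2/\norm{f_n}^2\le\norm{f_n}_{L^1(\mu)}^2/\norm{f_n}_{L^2(\mu)}^2\to0$ uniformly in $\lambda$. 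But a frame measure is necessarily infinite, so this does not help.
\item \textbf{What is really missing.} Off the zero set of $\wh\mu$, your integral is $\int\prod_{\ell}\abs{1+\eps Q_{k_\ell}(\rho^{j_0+\ell m}\lambda)}^2\dd\eta$, with $\eta=\abs{\wh\mu}^2\dd\nu$ finite. The integrand divided by $(1+\eps^2)^n$ tends to $0$ $\eta$-a.e. So the whole difficulty is uniform integrability of $\eta$ against integrands that are unbounded: they blow up at the poles of the quotients. Finiteness of $\eta$ does not give this. It can only come from the upper frame bound applied to \emph{further} test functions that are large exactly where these integrands are large. Your plan has no such step; character orthogonality only computes $L^2(\mu)$-norms, not integrals against $\nu$.
\item \textbf{The cyclic structure is not available.} $N\nmid B$ does not keep the orbit $k\mapsto Bk\bmod N$ away from $0$. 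Take $N=4$, $\rho=1/6$, so $B=6$, $m=1$, which the theorem covers. Then $1\mapsto2\mapsto0$ and $3\mapsto2\mapsto0$, so no nonzero residue has an orbit avoiding $0$, and your transfer operator on residues has nothing to act on. The hypothesis only guarantees one step: $B\cdot1\not\equiv0\pmod N$.
\end{itemize}

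That single step is exactly what the paper uses. Let $r_B\equiv B\pmod N$ with $1\le r_B\le N-1$.

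\begin{itemize}
\item \textbf{Lower bound.} With $F_s=W_{s,1}$, one has $\int\abs{Q_1(\rho^s\lambda)}^2\dd\eta=\int\abs{\widehat{F_s\mu}}^2\dd\nu\ge A$.
\item \textbf{Pointwise decay.} $Q_1(\rho^s\lambda)\to0$ for every $\lambda$, since $Q_1(0)=0$. So the mass must come from $\rho^s\lambda$ near the pole $1/N$ of $Q_1$.
\item \textbf{One-step propagation.} Near that pole, $\rho^{s-m}\lambda=B\rho^s\lambda$ is near the pole $r_B/N$ of $Q_{r_B}$. Hence $\abs{Q_{r_B}(\rho^{s-m}\lambda)}\ge\kappa\abs{Q_1(\rho^s\lambda)}$ once $\abs{Q_1(\rho^s\lambda)}\ge R_0$.
\item \textbf{Second test function.} The upper frame bound for $G_s=W_{s,1}W_{s-m,r_B}$ then gives $\int_{\{\abs{Q_1(\rho^s\lambda)}>R\}}\abs{Q_1(\rho^s\lambda)}^2\dd\eta(\lambda)\le C/(\kappa^2R^2)$, uniformly in $s$.
\item \textbf{Conclusion.} Dominated convergence on $\{\abs{Q_1(\rho^s\lambda)}\le R\}$ forces $\int\abs{Q_1(\rho^s\lambda)}^2\dd\eta\to0$, contradicting the lower bound.
\end{itemize}

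The same one-step propagation also settles your concern about zeros of $\wh\mu$. It shows $\widehat{W_{s,1}\mu}(\lambda)=0$ whenever $\wh\mu(\lambda)=0$, so any mass of $\nu$ on that zero set is harmless. No Riesz products, atomic reduction, local-mass bounds or contraction on residues are needed.
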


Since every ordinary Fourier frame induces an atomic frame measure, Theorem~\ref{thm:main} in particular rules out the existence of Fourier frames. The divisibility condition \(N\nmid B\) generalizes the parity obstruction arising in the binary setting \(N=2\). Theorem~\ref{thm:main} recovers the reciprocal-power nonexistence theorem established in~\cite{FuSongWang}. For all integers \(N\ge 3\), it provides new frame obstructions for general consecutive-$N$ digit self-similar measures.

\vspace{0.2cm}

 The proof proceeds by contradiction, combining analytic estimates with a character-theoretic construction based on an abstract contradiction principle (Theorem \ref{thm:abstract}). Assuming a frame measure exists, one derives a uniform positive lower bound
on certain \(L^2\)-norms, while the upper frame bound, a large-value
propagation estimate, and the dominated convergence theorem force the same
quantities to tend to zero. The character-theoretic side constructs unit-norm test functions and cyclic mask quotients using the symbolic coding associated with \(\mu_{\rho,N}\), and the independence inherent to these objects serves to verify exactly the hypotheses required for Theorem \ref{thm:abstract}.

\vspace{0.2cm}

The paper is organized as follows. In Section~\ref{sec:framework}, we compile the preliminary background on frame measures and establish the abstract obstruction principle central to our argument. In Section~\ref{sec:packets}, we construct the cyclic group structure of $N$-digit Bernoulli convolutions through symbolic character packets, twisted masks, and factorizations of Fourier transforms. In Section~\ref{sec:contradiction}, we convert this cyclic algebraic structure into a rigorous obstruction precluding the existence of frame measures and complete the proof of Theorem~\ref{thm:main}.

\section{Preliminaries}\label{sec:framework}

In this section, we introduce the basic theory of frame measures and extract the key analytic principle underlying the proof of Theorem \ref{thm:main}. We adopt the following standard Fourier transform convention throughout the paper: for any finite Borel measure $\sigma$ on $\R$, $\wh\sigma(\lambda)=\int_{\R}\e^{-2\pi i\lambda x}\dd\sigma(x)$.  Similarly, for $f\in L^2(\nu)$,  we define the Fourier transform of the measure \(f\nu\) by 
$$\wh{f\nu}(\lambda)=\int_{\R}f(x)\e^{-2\pi i\lambda x}\dd\nu(x),$$ 
which conforms to the standard frame-measure formalism in~\cite{DutkayHanWeber,DutkayLai}.

\begin{definition}\label{def:frame-measure}
Let $\nu$ denote a compactly supported Borel probability measure on $\R$.
A countable subset $\Lambda\subset\R$ is called a Fourier frame spectrum for $\nu$ if there exist constants $0<A\le C<\infty$ satisfying
\begin{equation}\label{eq:fourier-frame}
 A\norm{f}_{L^2(\nu)}^2
 \le \sum_{\lambda\in\Lambda}\abs{\wh{f\nu}(\lambda)}^2
 \le C\norm{f}_{L^2(\nu)}^2
\end{equation}
for every $f\in L^2(\nu)$.  For the broader setting, a positive Borel measure $\Gamma$ on $\R$ is said to be a frame measure for $\nu$ if
\begin{equation}\label{eq:frame-measure}
 A\norm{f}_{L^2(\nu)}^2
 \le \int_{\R}\abs{\wh{f\nu}(\lambda)}^2\dd\Gamma(\lambda)
 \le C\norm{f}_{L^2(\nu)}^2
\end{equation}
holds for every $f\in L^2(\nu)$ with uniform constants $0<A\le C<\infty$.
\end{definition}

Setting $\Gamma=\sum_{\lambda\in\Lambda}\delta_\lambda$ recovers the classical discrete Fourier frame inequality \eqref{eq:fourier-frame}, whereas $\Gamma=\sum_{\lambda\in\Lambda}w_\lambda\delta_\lambda$ with $w_\lambda>0$ gives a weighted Fourier frame.  Thus, the frame-measure formalism also accommodates genuinely non-atomic frequency measures. Given a frame measure $\Gamma$ associated to $\nu$, we introduce the auxiliary measure $\eta_\Gamma$ via the density relation 
$$\dd\eta_\Gamma(\lambda)=\abs{\wh\nu(\lambda)}^2\dd\Gamma(\lambda).$$ Applying the upper frame inequality to $f\equiv1$ gives $\eta_\Gamma(\R)\le C$, so $\eta_\Gamma$ is a finite Borel measure.

\vspace{0.2cm}

The existence of a frame measure is invariant under translations and nonvanishing dilations. We state this elementary invariance property in the precise form required for subsequent arguments. 

\begin{lemma}\label{lem:affine-invariance}
Let $T(x)=ax+c$ with $a\ne0$, and let $\widetilde\nu=T_\#\nu$, the pushforward measure of \(\nu\) under \(T\).  Then $\nu$ admits a frame measure if and only if $\widetilde\nu$ does.  
\end{lemma}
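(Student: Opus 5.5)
The plan is to transport a frame measure for $\nu$ to one for $\widetilde\nu$ by an explicit pushforward on the frequency side. Since $T^{-1}(y)=a^{-1}(y-c)$ is again affine with nonzero slope and $\nu=(T^{-1})_\#\widetilde\nu$, one direction suffices. So assume $\Gamma$ is a frame measure for $\nu$ with bounds $A,C$, and define $\widetilde\Gamma=D_\#\Gamma$ with $D(\lambda)=\lambda/a$. Equivalently, $\int g\dd\widetilde\Gamma=\int g(\lambda/a)\dd\Gamma(\lambda)$ for nonnegative Borel $g$. We will show that $\widetilde\Gamma$ is a frame measure for $\widetilde\nu$ with the same constants $A,C$.

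The first step is the isometric identification of the two $L^2$ spaces. For $\widetilde f\in L^2(\widetilde\nu)$, set $f=\widetilde f\circ T$. By the change-of-variables formula for pushforwards,
\[
\norm{f}_{L^2(\nu)}^2=\int\abs{\widetilde f(T x)}^2\dd\nu(x)=\norm{\widetilde f}_{L^2(\widetilde\nu)}^2,
\]
so $\widetilde f\mapsto\widetilde f\circ T$ is a unitary map of $L^2(\widetilde\nu)$ onto $L^2(\nu)$. It is surjective because $f\circ T^{-1}$ is a preimage of any $f$. We also record that $\widetilde\nu$ is again a compactly supported Borel probability measure, so it fits Definition~\ref{def:frame-measure}.

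The second step computes the Fourier transform:
\[
\widehat{\widetilde f\widetilde\nu}(\xi)=\int \widetilde f(Tx)\,\e^{-2\pi i\xi(ax+c)}\dd\nu(x)=\e^{-2\pi i\xi c}\,\widehat{f\nu}(a\xi).
\]
Hence $\abs{\widehat{\widetilde f\widetilde\nu}(\xi)}^2=\abs{\wh{f\nu}(a\xi)}^2$. Integrating this against $\widetilde\Gamma$ and using the definition of $\widetilde\Gamma$ with $g(\xi)=\abs{\wh{f\nu}(a\xi)}^2$ gives
\[
\int\abs{\widehat{\widetilde f\widetilde\nu}}^2\dd\widetilde\Gamma=\int\abs{\wh{f\nu}(\lambda)}^2\dd\Gamma(\lambda).
\]
By \eqref{eq:frame-measure} and the isometry from the first step, this quantity lies between $A\norm{\widetilde f}^2$ and $C\norm{\widetilde f}^2$. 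This is exactly the frame-measure inequality for $\widetilde\nu$.

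There is no real obstacle. The only care needed is measure-theoretic: $\widetilde\Gamma$ is a positive Borel measure because $D$ is a homeomorphism, and the integrand $g$ is a nonnegative continuous function, so the pushforward change of variables applies without integrability assumptions.
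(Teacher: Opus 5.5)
Your proposal is correct and follows essentially the same route as the paper. Both arguments use the unitary composition map $f\mapsto f\circ T$, the identity $\widehat{\widetilde f\widetilde\nu}(\xi)=\e^{-2\pi i\xi c}\wh{f\nu}(a\xi)$, and the pushforward $\widetilde\Gamma=(D_{1/a})_\#\Gamma$, and they obtain the reverse direction from $T^{-1}$.
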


\begin{proof}
Define the operator $U:L^2(\widetilde\nu)\to L^2(\nu)$ by composition $Uf=f\circ T$. Then this operator is unitary.  For $f\in L^2(\widetilde\nu)$, set $g=Uf$, a straightforward change-of-variable calculation yields
\begin{equation}\label{eq:affine-fourier}
 \wh{f\widetilde\nu}(\lambda)
 =\e^{-2\pi i\lambda c}\wh{g\nu}(a\lambda).
\end{equation}
Suppose that $\Gamma$ is a frame measure for $\nu$, and define $D_{1/a}(\xi)=\xi/a$ and $\widetilde\Gamma=(D_{1/a})_\#\Gamma$.  Then
\begin{align*}
 \int_{\R}\abs{\wh{f\widetilde\nu}(\lambda)}^2\dd\widetilde\Gamma(\lambda)
 &=\int_{\R}\abs{\wh{g\nu}(a\lambda)}^2\dd\widetilde\Gamma(\lambda)\\
 &=\int_{\R}\abs{\wh{g\nu}(\xi)}^2\dd\Gamma(\xi).
\end{align*}
Since $U$ is an isometry on $L^2$-norm, the lower and upper frame bounds are preserved under this transformation.  Repeating the argument for the inverse map $T^{-1}$ establishes the reverse implication. 
\end{proof}


The above analysis does not rely on the Bernoulli convolution structure. We now extract the contradiction mechanism, which we later verify via cyclic packets and associated mask quotients for $N$-Bernoulli convolutions we consider. Our conclusion requires solely a finite-frame-induced measure, pointwise decay bounds, and a uniform propagation estimate, with no self-similarity condition imposed in either the claim or its proof.  

\begin{theorem}\label{thm:abstract}
	Let $\nu$ be a compactly supported Borel probability measure and let \(\Gamma\) be a positive Borel measure. Set
	\[
	\dd\eta_\Gamma(\lambda)=\abs{\wh\nu(\lambda)}^2\dd\Gamma(\lambda).
	\]
	If there exists an integer $s_0$ such that for all integers $s\ge s_0$, we may associate functions $F_s,G_s\in L^2(\nu)$ and measurable functions $q_s,r_s$ on $\R$ satisfying the following conditions.
	\begin{enumerate}[label=\textup{(\roman*)}]
		\item There exist constants $a,b>0$ such that $\norm{F_s}_{L^2(\nu)}^2\ge a$ and $\norm{G_s}_{L^2(\nu)}^2\le b$.
		\item For $\Gamma$-almost every $\lambda$,
		\begin{equation}\label{eq:factor-hyp}
			\abs{\wh{F_s\nu}(\lambda)}^2
			=\abs{\wh\nu(\lambda)}^2\abs{q_s(\lambda)}^2,
		\end{equation}
	 and
		\begin{equation}\label{eq:pair-hyp}
			\abs{\wh\nu(\lambda)}^2\abs{q_s(\lambda)r_s(\lambda)}^2
			\le \abs{\wh{G_s\nu}(\lambda)}^2.
		\end{equation}
		\item For $\eta_\Gamma$-almost every $\lambda$, one has $q_s(\lambda)\to0$ as \(s\to \infty\).
		\item There exist constants $R_0,\kappa>0$ such that, for every $s\ge s_0$ and 	for $\eta_\Gamma$-a.e. $\lambda\in\R$, 
		\begin{equation}\label{eq:prop-hyp}
			\abs{q_s(\lambda)}\ge R_0
			\quad\Longrightarrow\quad
			\abs{r_s(\lambda)}\ge\kappa\abs{q_s(\lambda)}.
		\end{equation}
	\end{enumerate}
	Then \(\Gamma\) cannot be a frame measure for  $\nu$.
\end{theorem}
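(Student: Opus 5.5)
Assume, towards a contradiction, that $\Gamma$ is a frame measure for $\nu$ with bounds $0<A\le C<\infty$. We will show that the integrals
\[
I_s:=\int_{\R}\abs{q_s(\lambda)}^2\dd\eta_\Gamma(\lambda)
\]
are bounded below by a positive constant, yet tend to zero as $s\to\infty$.

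\textbf{Lower bound.} By \eqref{eq:factor-hyp} and the lower frame bound,
\[
I_s=\int\abs{\wh{F_s\nu}}^2\dd\Gamma\ \ge\ A\norm{F_s}^2_{L^2(\nu)}\ \ge\ Aa
\]
for every $s\ge s_0$. So it suffices to prove $I_s\to0$.

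\textbf{Splitting.} Fix $s\ge s_0$ and write $E_s=\{\lambda:\abs{q_s(\lambda)}\ge R_0\}$. Split $I_s$ into the integral over $E_s$ and the integral over its complement.

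\textbf{Small values.} On $\R\setminus E_s$ the integrand is dominated by $R_0^2$. Since $\eta_\Gamma(\R)\le C$ (from the upper frame bound applied to $f\equiv1$), $\eta_\Gamma$ is finite and the constant $R_0^2$ is integrable. Condition (iii) gives $q_s\to0$ $\eta_\Gamma$-a.e., so dominated convergence yields
\[
\int_{\R\setminus E_s}\abs{q_s}^2\dd\eta_\Gamma\ \le\ \int_{\R}\abs{q_s}^2\1_{\{\abs{q_s}<R_0\}}\dd\eta_\Gamma\ \longrightarrow\ 0.
\]

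\textbf{Large values.} This part uses the propagation estimate. On $E_s$, condition (iv) gives $\abs{r_s}\ge\kappa\abs{q_s}\ge\kappa R_0$, so
\[
\abs{q_s}^2\ \le\ \frac{\abs{q_s}^2\abs{r_s}^2}{\kappa^2 R_0^2}.
\]
Integrating against $\eta_\Gamma$ and using \eqref{eq:pair-hyp} $\Gamma$-a.e., then the upper frame bound and (i),
\[
\int_{E_s}\abs{q_s}^2\dd\eta_\Gamma\ \le\ \frac{1}{\kappa^2R_0^2}\int\abs{\wh{G_s\nu}}^2\dd\Gamma\ \le\ \frac{Cb}{\kappa^2R_0^2}.
\]

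\textbf{Main obstacle.} This bound is uniform in $s$ but not small, so the large-value part needs an extra step. I would first try to exploit that $q_s\to0$ pointwise forces $\eta_\Gamma(E_s)\to0$: since $\eta_\Gamma$ is finite, $\1_{E_s}\to0$ a.e. and dominated convergence applies to the indicators. The difficulty is then that the quantity $\abs{q_s r_s}^2\,\dd\eta_\Gamma$ is controlled only in total mass, not in its concentration on small sets.

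To get smallness, I would use the quadratic gain in the propagation estimate more carefully. On $E_s$ we have $\abs{q_s}^2\le\abs{q_sr_s}^2/(\kappa^2\abs{q_s}^2)$. Splitting further at a level $M\ge R_0$:
\begin{itemize}[label=--]
\item where $\abs{q_s}\ge M$, the contribution is at most $Cb/(\kappa^2M^2)$;
\item where $R_0\le\abs{q_s}<M$, the integrand is dominated by $M^2$, and $q_s\to0$ a.e. sends this piece to zero by dominated convergence.
\end{itemize}
Hence $\limsup_s I_s\le Cb/(\kappa^2M^2)$ for every $M\ge R_0$. Letting $M\to\infty$ gives $I_s\to0$, which contradicts $I_s\ge Aa>0$.

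The delicate step is this second split. It uses $\abs{r_s}^2\ge\kappa^2\abs{q_s}^2$ rather than merely $\abs{r_s}\ge\kappa R_0$, and this is exactly why hypothesis (iv) is stated proportionally to $\abs{q_s}$.
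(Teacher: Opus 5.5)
Your argument is correct and is essentially the paper's proof: the lower bound $\int\abs{q_s}^2\dd\eta_\Gamma\ge Aa$ from $F_s$, the uniform bound $\int\abs{q_sr_s}^2\dd\eta_\Gamma\le Cb$ from $G_s$, the estimate $\abs{q_s}^2\le\abs{q_sr_s}^2/(\kappa^2M^2)$ on $\{\abs{q_s}\ge M\}$, dominated convergence on the truncated part, and then $M\to\infty$. The only difference is cosmetic: your preliminary split at $R_0$ is redundant, because the paper splits once at an arbitrary level $R\ge R_0$, and your two lower pieces together make up its bounded part.
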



\begin{proof}
Assume, for contradiction, that $\Gamma$ is a frame measure for $\nu$. Let $A, C$ be its lower and upper frame bounds respectively. Recall the auxiliary measure $\eta=\eta_\Gamma$ constructed earlier, which is a finite Borel measure on \(\mathbb{R}\).

We first establish a uniform lower bound for the integral of \(|q_s|^2\) with respect to \(\eta\).  Combining the definition of $\eta$, \eqref{eq:factor-hyp}, and the lower frame inequality applied to $F_s$, we have
\begin{align}
 \int_{\R}\abs{q_s(\lambda)}^2\dd\eta(\lambda)
 &=\int_{\R}\abs{\wh\nu(\lambda)}^2
   \abs{q_s(\lambda)}^2\dd\Gamma(\lambda)\notag\\
 &=\int_{\R}\abs{\wh{F_s\nu}(\lambda)}^2\dd\Gamma(\lambda)\notag\\
 &\ge A\norm{F_s}_{L^2(\nu)}^2
 \ge Aa.
 \label{eq:lower-moment}
\end{align}

We next derive a uniform upper bound for the integral of \(|q_s r_s|^2\). Using \eqref{eq:pair-hyp} and the upper frame inequality for $G_s$,
\begin{align}
 \int_{\R}\abs{q_s(\lambda)r_s(\lambda)}^2\dd\eta(\lambda)
 &=\int_{\R}\abs{\wh\nu(\lambda)}^2
   \abs{q_s(\lambda)r_s(\lambda)}^2\dd\Gamma(\lambda)\notag\\
 &\le\int_{\R}\abs{\wh{G_s\nu}(\lambda)}^2\dd\Gamma(\lambda)\notag\\
 &\le C\norm{G_s}_{L^2(\nu)}^2
 \le Cb.
 \label{eq:upper-product}
\end{align}

Fix $R\ge R_0$. Decomposing the integral of \(|q_s|^2\) over the level set \(\{|q_s|\le R\}\) and its complement gives 
\begin{equation}\label{eq:energy-split}
 \int_{\R}\abs{q_s}^2\dd\eta
 =\int_{\{\abs{q_s}\le R\}}\abs{q_s}^2\dd\eta
 +\int_{\{\abs{q_s}>R\}}\abs{q_s}^2\dd\eta.
\end{equation}
On the high-level set \(\{|q_s|>R\}\), condition (iv) (the uniform propagation estimate) gives $\abs{q_sr_s}^2\ge\kappa^2\abs{q_s}^4$. Meanwhile, \(|q_s|>R\) implies $\abs{q_s}^2\le R^{-2}\abs{q_s}^4$. Consequently, by \eqref{eq:upper-product},
\begin{align}
 \int_{\{\abs{q_s}>R\}}\abs{q_s}^2\dd\eta
 &\le \frac1{R^2}\int_{\{\abs{q_s}>R\}}\abs{q_s}^4\dd\eta\notag\\
 &\le \frac1{\kappa^2R^2}
 \int_{\{\abs{q_s}>R\}}\abs{q_sr_s}^2\dd\eta\notag\\
 &\le \frac{Cb}{\kappa^2R^2}.
 \label{eq:tail}
\end{align}
This estimate is uniform in $s$. For the bounded low-level component, define the truncated function
 $h_{s,R}(\lambda)=\abs{q_s(\lambda)}^2\1_{\{\abs{q_s(\lambda)}\le R\}}$. Then $0\le h_{s,R}\le R^2$, and hypothesis~(iii) gives $h_{s,R}(\lambda)\to0$ for $\eta$-almost every $\lambda$ as \(s\to\infty\). Since $\eta(\R)<\infty$, the Dominated Convergence Theorem yields
\begin{equation}\label{eq:bounded-part}
 \lim_{s\to\infty}\int_{\R}h_{s,R}\dd\eta=0.
\end{equation}
Taking the \(\limsup\limits_{s\to\infty}\) across both terms in decomposition \eqref{eq:energy-split} and substituting bounds \eqref{eq:tail}--\eqref{eq:bounded-part}, we obtain 
\begin{equation}\label{eq:limsup-energy}
 \limsup_{s\to\infty}\int_{\R}\abs{q_s}^2\dd\eta
 \le\frac{Cb}{\kappa^2R^2},
 \qquad R\ge R_0.
\end{equation}
The right-hand side tends to zero as $R\to\infty$, and therefore $\int_{\R}\abs{q_s}^2\dd\eta\to0$ as \(s\to \infty\). This contradicts the uniform positive lower bound \eqref{eq:lower-moment}. Hence $\Gamma$ is not a frame measure for \(\nu\).
\end{proof}

\section{Cyclic Structure of $\mu_{\rho,N}$}\label{sec:packets}

We now specialize to the consecutive-digit Bernoulli convolution $\mu=\mu_{\rho,N}$ introduced in Section \ref{sec:introduction}, with parameters satisfying $0<\rho<1/N$.  The purpose of this section is to identify  the structural objects that underlie the contradiction mechanism developed in Theorem~\ref{thm:abstract}. 

\subsection{Symbolic coding and coordinate packets}

Let \(\mu_{\rho,N}\) be the equal-weight self-similar probability measure generated by the affine iterated maps \(\{S_a(x)=\rho (x+a)\}_{a=0}^{N-1}\). Let $\Omega=\{0,1,\ldots,N-1\}^{\N}$ denote the one-sided infinite symbolic space over $N$ symbols.  We equip $\Omega$ with the product probability measure $\mathbb P$ under which the coordinate maps are independent and uniformly distributed on $\{0,1,\ldots,N-1\}$.  Define the coding map $\pi:\Omega\to \R$ by
\begin{equation}\label{eq:coding}
 \pi(a_1,a_2,\ldots)=\sum_{j=1}^{\infty}a_j\rho^j.
\end{equation}
Then $\mu_{\rho,N}=\pi_\#\mathbb P$. For the series expansion \eqref{eq:coding}, whenever \(0<\rho<1/N\), the first differing digit dominates the entire remaining tail. This separation property yields the following uniqueness statement for symbolic codings.

\begin{lemma}\label{lem:injective}
If $0<\rho<1/N$, then $\pi$ is injective. Consequently, $\pi$ is a homeomorphism from $\Omega$ onto $\supp\mu_{\rho,N}$, and every finite-coordinate function on $\Omega$ induces a Borel function in $L^2(\mu_{\rho,N})$.
\end{lemma}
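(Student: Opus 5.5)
The plan is to prove injectivity by comparing two codings at their first differing digit, and then to get the homeomorphism and measurability statements from compactness. Let $\omega=(a_j)$ and $\omega'=(a_j')$ be distinct points of $\Omega$, and let $k$ be the first index with $a_k\ne a_k'$. Since the first $k-1$ terms cancel,
\[
\pi(\omega)-\pi(\omega')=(a_k-a_k')\rho^k+\sum_{j>k}(a_j-a_j')\rho^j .
\]
The leading term has absolute value at least $\rho^k$. Each tail coefficient satisfies $\abs{a_j-a_j'}\le N-1$, so the tail is bounded by
\[
(N-1)\sum_{j>k}\rho^j=(N-1)\frac{\rho^{k+1}}{1-\rho}.
\]
It therefore suffices to check $(N-1)\rho<1-\rho$, which is the same as $N\rho<1$ and holds by assumption. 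This gives a strict inequality, so $\abs{\pi(\omega)-\pi(\omega')}\ge \rho^k\bigl(1-\tfrac{(N-1)\rho}{1-\rho}\bigr)>0$, and $\pi$ is injective.

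For the homeomorphism claim, equip $\Omega$ with the product topology, which makes it compact. The map $\pi$ is continuous because the series converges uniformly: changing coordinates beyond index $n$ moves $\pi$ by at most $(N-1)\rho^{n+1}/(1-\rho)$. A continuous injection from a compact space into the Hausdorff space $\R$ is a homeomorphism onto its image. That image $K=\pi(\Omega)$ is compact, and I would identify it with $\supp\mu_{\rho,N}$ as follows.
\begin{itemize}
\item Since $\mu_{\rho,N}=\pi_\#\mathbb P$ is carried by the closed set $K$, we have $\supp\mu_{\rho,N}\subseteq K$.
\item Conversely, fix $x=\pi(\omega)\in K$ and a neighbourhood $U$ of $x$. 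The preimage $\pi^{-1}(U)$ is open and contains $\omega$, so it contains a cylinder set, and cylinder sets have positive $\mathbb P$-measure $N^{-n}$. Hence $\mu_{\rho,N}(U)>0$, so $x\in\supp\mu_{\rho,N}$.
\end{itemize}

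For the last claim, let $\phi$ be a function depending only on $a_1,\dots,a_n$. Then $\phi$ is locally constant on $\Omega$, hence continuous and bounded. Composing with the continuous inverse of $\pi$ on $K$ gives the function $f=\phi\circ\pi^{-1}$, extended by $0$ off $K$. This $f$ is Borel and bounded, and so lies in $L^2(\mu_{\rho,N})$.

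The only real point is the strict tail estimate, which is exactly where the hypothesis $\rho<1/N$ enters; the remaining steps are standard compactness arguments.
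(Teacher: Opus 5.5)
Your proof is correct and follows the paper's argument. You use the same first-differing-digit estimate $\rho^k\bigl(1-\tfrac{(N-1)\rho}{1-\rho}\bigr)>0$, then compactness for the homeomorphism and boundedness for the $L^2$ claim, and you also fill in details the paper leaves implicit: continuity of $\pi$, and the identification $\pi(\Omega)=\supp\mu_{\rho,N}$ via cylinders of positive measure.
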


\begin{proof}
Let ${\bf{a}}=(a_j)$ and ${\bf{b}}=(b_j)$ be distinct sequences in \(\Omega\), and let $k$ be the first index for which $a_k\ne b_k$. Since $\abs{a_k-b_k}\ge1$, we have
\begin{align}
 \abs{\pi({\bf{a}})-\pi({\bf{b}})}
 &\ge \rho^k-(N-1)\sum_{j>k}\rho^j\notag\\
 &=\rho^k\left(1-\frac{(N-1)\rho}{1-\rho}\right)>0.\label{eq:separation}
\end{align}
The last inequality is obtained since $0<\rho<1/N$. This shows that $\pi$ is injective. Compactness of \(\Omega\) combined with injectivity of \(\pi\) yields the homeomorphism property onto the support of \(\mu_{\rho,N}\). 

For the last claim: take any finite-coordinate function \(\phi:\Omega\to\mathbb{C}\), meaning \(\phi(a_1,a_2,\dots)\) depends only on finitely many coordinates \(a_1,\dots,a_m\) for some fixed \(m\in\mathbb{N}\). Define the pushforward function \(\Phi = \phi\circ\pi^{-1}\) on \(\operatorname{supp}\mu_{\rho,N}\). Since finite-coordinate functions on the compact symbolic space \(\Omega\) are globally bounded continuous functions, \(\Phi\) is bounded Borel measurable on \(\mathbb{R}\). Boundedness immediately gives \(\int_{\mathbb{R}}|\Phi|^2d\mu_{\rho,N}<\infty\), so \(\Phi\in L^2(\mu_{\rho,N})\).
\end{proof}

\begin{remark}
	The homeomorphism \(\pi\) transfers bounded finite-coordinate functions (in particular cylinder indicators) over the symbolic space \(\Omega\) into bounded elements of \(L^2(\mu_{\rho,N})\). This correspondence enables the explicit construction of the test functions \(F_s,G_s\) required in Theorem \ref{thm:abstract} through symbolic combinatorics.
\end{remark}

Let $\omega=\e^{2\pi i/N}$. For $s\in\N$ and $r\in\{1,\ldots,N-1\}$, define \emph{the coordinate character packet \(W_{s,r}:\Omega\to\mathbb C\)} by
\begin{equation}\label{eq:packet}
 W_{s,r}({\bf{a}})=\omega^{ra_s}, \qquad {\bf{a}} = (a_1,a_2,\ldots).
\end{equation}
By Lemma 3.1, this function descends to a well-defined function on \(\pi(\Omega)\subset\mathbb R\). Since each \(W_{s,r}\) depends only on the $s$-th coordinate, it is a finite-coordinate function, and thus induces a bounded element of \(L^2(\mu_{\rho,N})\).

\subsection{Mask factorization and cyclic quotients}

Define the ordinary mask and its character twists respectively by
\begin{equation}\label{eq:masks}
 M(t)=\frac1N\sum_{a=0}^{N-1}\e^{-2\pi iat},
 \qquad
 M_r(t)=\frac1N\sum_{a=0}^{N-1}\omega^{ra}\e^{-2\pi iat},
 \quad 1\le r\le N-1.
\end{equation}

The Fourier transform of the self-similar measure $\mu_{\rho,N}$ possesses a standard infinite product representation associated with the one-step mask.

\begin{lemma}[cf.~\cite{DaiHeLau}]\label{lem:mask-product}
For every $\lambda\in\R$,
\begin{equation}\label{eq:mask-product}
 \wh\mu_{\rho,N}(\lambda)
 =\prod_{j=1}^{\infty}M(\rho^j\lambda).
\end{equation}
Moreover, $\sum_{j\ge1}\abs{1-M(\rho^j\lambda)}<\infty$.  Consequently, the product in \eqref{eq:mask-product} converges absolutely and vanishes if and only if at least one factor $M(\rho^j\lambda) = 0$.
\end{lemma}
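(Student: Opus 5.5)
We need to prove Lemma \ref{lem:mask-product}: $\wh\mu(\lambda)=\prod_{j\ge1} M(\rho^j\lambda)$, the absolute summability of $|1-M(\rho^j\lambda)|$, and the vanishing criterion.

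The plan is to compute the Fourier transform directly from the random series \eqref{eq:intro-series}. Since $\mu_{\rho,N}=\operatorname{Law}(X_{\rho,N})$, we have $\wh\mu_{\rho,N}(\lambda)=\mathbb E\,\e^{-2\pi i\lambda X_{\rho,N}}$. We truncate to the partial sums $X^{(n)}=\sum_{j=1}^n A_j\rho^j$. By independence of the $A_j$, the expectation of $\e^{-2\pi i\lambda X^{(n)}}$ factors as
\[
\prod_{j=1}^n \mathbb E\,\e^{-2\pi i\lambda\rho^jA_j}=\prod_{j=1}^n \frac1N\sum_{a=0}^{N-1}\e^{-2\pi i a\rho^j\lambda}=\prod_{j=1}^nM(\rho^j\lambda).
\]
Since $X^{(n)}\to X_{\rho,N}$ surely (uniformly, because the tail is bounded by $(N-1)\rho^{n+1}/(1-\rho)$), bounded convergence gives $\mathbb E\,\e^{-2\pi i\lambda X^{(n)}}\to\wh\mu_{\rho,N}(\lambda)$. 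Hence the partial products converge to $\wh\mu_{\rho,N}(\lambda)$, which establishes \eqref{eq:mask-product} as a limit of partial products. Alternatively, one could iterate the self-similarity identity $\wh\mu(\lambda)=M(\rho\lambda)\wh\mu(\rho\lambda)$ and use $\wh\mu(\rho^n\lambda)\to\wh\mu(0)=1$ by continuity; either route works.

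For summability we use the elementary bound $|1-\e^{-2\pi i x}|\le 2\pi|x|$. This gives
\[
|1-M(t)|\le \frac1N\sum_{a=0}^{N-1}|1-\e^{-2\pi i a t}|\le \pi(N-1)|t|.
\]
Therefore $\sum_j|1-M(\rho^j\lambda)|\le \pi(N-1)|\lambda|\rho/(1-\rho)<\infty$, since $0<\rho<1$.

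The vanishing statement follows from the standard theory of infinite products, and this is the only point needing care. Because $\sum_j|1-M(\rho^j\lambda)|<\infty$, there is $J$ with $|1-M(\rho^j\lambda)|\le 1/2$ for all $j>J$; for instance, any $J$ with $\pi(N-1)|\lambda|\rho^{J}\le1/2$ works. For $j>J$ we may write $M(\rho^j\lambda)=\e^{\log M(\rho^j\lambda)}$ using the principal logarithm, with $|\log(1+z)|\le 2|z|$ for $|z|\le1/2$. The series $\sum_{j>J}\log M(\rho^j\lambda)$ then converges absolutely, so the tail product $\prod_{j>J}M(\rho^j\lambda)$ converges to $\exp$ of a finite number, which is nonzero.

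Consequently $\wh\mu_{\rho,N}(\lambda)$ equals the finite product $\prod_{j\le J}M(\rho^j\lambda)$ times a nonzero number. It vanishes if and only if some factor with $j\le J$ vanishes; factors with $j>J$ cannot vanish, since they satisfy $|M-1|\le 1/2$. This also shows the product converges absolutely in the usual sense.
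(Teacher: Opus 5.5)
Your proof is correct and complete. The paper gives no proof of this lemma and simply cites it from the literature. Your derivation of the product formula is the same truncation, independence and bounded-convergence argument the paper runs for the twisted packets in Lemma~\ref{lem:packet-factor}. Your bound $\abs{1-M(t)}\le\pi(N-1)\abs{t}$ and the logarithmic tail-splitting step supply the summability and vanishing details that the paper leaves to the reference.
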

 
The ordinary and twisted masks are finite geometric sums, and their ratio defines \(Q_r(t)=M_r(t)/M(t)\). The following lemma describes key pointwise properties of this quotient, including its analyticity and singularity structure.

\begin{lemma}\label{lem:quotient}
	Let the base mask \(M(t)\) and twisted mask \(M_r(t)\) be defined by \eqref{eq:masks}. 
	\begin{enumerate}[label=\textup{(\roman*)},leftmargin=2.2em]
		\item For every \(k \in \{0,1,\dots,N-1\}\), 
		\[M\!\left(\frac{k}{N}\right) = 
		\begin{cases}
			1, & k = 0, \\[4pt]
			0, & k \ne 0,
		\end{cases} \qquad
		M_r\!\left(\frac{k}{N}\right) = 
	\begin{cases}
		1, & k = r, \\[4pt]
		0, & k \neq r.
	\end{cases}\]
\item At any point where $M(t)\neq 0$, the quotient
$Q_r(t):=M_r(t)/M(t)$ is analytic. At each zero
$t=k/N$ with $1\le k\le N-1$, the quotient has a simple pole
if $k=r$, and a removable singularity if $k\neq r$.
We extend $Q_r$ continuously across its removable singularities and set
$Q_r=0$ at its poles.
	\end{enumerate}
\end{lemma}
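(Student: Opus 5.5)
Part (i) is a direct evaluation of finite geometric sums, and part (ii) then follows from a local factorization of the masks near the zeros $t=k/N$.

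\emph{Part (i).} Substituting $t=k/N$ gives
\[
M(k/N)=\frac1N\sum_{a=0}^{N-1}\omega^{-ak}.
\]
By orthogonality of the characters of $\mathbb Z/N\mathbb Z$, this equals $1$ if $k\equiv0\pmod N$ and $0$ otherwise. In the same way,
\[
M_r(k/N)=\frac1N\sum_{a=0}^{N-1}\omega^{a(r-k)},
\]
which equals $1$ exactly when $k\equiv r\pmod N$. Since $0\le k,r\le N-1$, this means $k=r$.

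\emph{Part (ii), analyticity.} Both $M$ and $M_r$ are trigonometric polynomials, so they extend to entire functions of $t$. Their quotient is therefore meromorphic and analytic wherever $M(t)\ne0$.

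\emph{Zeros of $M$.} For $t\notin\mathbb Z$ the geometric sum formula gives
\[
M(t)=\frac{1-\e^{-2\pi iNt}}{N(1-\e^{-2\pi it})}.
\]
Hence the real zeros of $M$ are exactly the points $t\in\frac1N\mathbb Z\setminus\mathbb Z$.

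\emph{Order of the zeros.} At such a point the numerator has a simple zero, because its derivative there is $2\pi iN\e^{-2\pi iNt}\ne0$. The denominator does not vanish. So the zero of $M$ at $k/N$, $1\le k\le N-1$, is simple. Alternatively, simplicity follows from $M'(k/N)=-\frac{2\pi i}{N}\sum_a a\,\omega^{-ak}$, which equals $-\frac{2\pi i}{N}\cdot\frac{N}{\omega^{-k}-1}\ne0$.

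\emph{Twisted mask.} Replacing $\e^{-2\pi iat}$ by $\omega^{ra}\e^{-2\pi iat}=\e^{-2\pi ia(t-r/N)}$ shows that
\[
M_r(t)=M(t-r/N).
\]

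\emph{Classification of the singularities.} Take a zero $t=k/N$ of $M$ with $1\le k\le N-1$.
\begin{itemize}
\item If $k=r$, then $M_r(r/N)=1\ne0$ by part (i). The simple zero of the denominator then produces a simple pole.
\item If $k\ne r$, then $M_r(k/N)=0$ by part (i), and the identity $M_r(t)=M(t-r/N)$ shows this zero is also simple. The two simple zeros cancel, so the singularity is removable. The limit value is $M_r'(k/N)/M'(k/N)$.
\end{itemize}

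\emph{Conventions.} Extending $Q_r$ continuously across the removable points and setting $Q_r=0$ at the poles is a definition, so nothing further needs proof.

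\emph{Remaining gap.} The only point needing care is that the zero of $M_r$ at $k\ne r$ is exactly simple, neither missing nor of higher order. Without this, the singularity could still be removable, but the order-counting argument would not justify that conclusion. The translation identity $M_r(t)=M(t-r/N)$ settles this: the zeros of $M_r$ are exactly $\frac1N\mathbb Z\setminus(r/N+\mathbb Z)$, all simple.

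Finally, $M$ and $M_r$ are $1$-periodic. So if the lemma is meant for all real $t$, the same statements hold at every $t\equiv k/N\pmod 1$ by periodicity.
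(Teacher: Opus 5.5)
Your proof is correct and follows essentially the same route as the paper: character orthogonality for (i), the geometric-series closed form to locate the zeros of $M$, and order counting at $t=k/N$. Your translation identity $M_r(t)=M(t-r/N)$ and explicit derivative computation actually justify the simplicity claims that the paper only asserts. One minor correction to your ``remaining gap'' remark: for removability at $k\neq r$ you only need $M_r(k/N)=0$ against the simple zero of $M$, since a numerator zero of any order $\ge 1$ suffices, so exact simplicity of the numerator's zero is not required.
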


\begin{proof}
Set $z=\e^{-2\pi it}$. If $z\ne1$, the finite geometric-series formula gives 
$$\sum_{a=0}^{N-1}z^a=(1-z^N)/(1-z).$$
 Likewise, if $\omega^r z\ne1$, then $\sum_{a=0}^{N-1}(\omega^r z)^a=(1-z^N)/(1-\omega^r z)$ since $\omega^{rN}=1$. Hence
\begin{equation*}
 M(t)=\frac1N\frac{1-z^N}{1-z},
 \qquad
 M_r(t)=\frac1N\frac{1-z^N}{1-\omega^r z}.
\end{equation*}
Now $M(t)=0$ exactly when $z^N=1$ and $z\ne1$, that is, when $z=\e^{-2\pi ik/N}$ for some $k\in\{1,\ldots,N-1\}$. At $t=k/N$,
\begin{align*}
 M_r(k/N)
 &=\frac1N\sum_{a=0}^{N-1}
   \e^{2\pi ira/N}\e^{-2\pi ika/N}\\
 &=\frac1N\sum_{a=0}^{N-1}
   \e^{2\pi i(r-k)a/N},
\end{align*}
which equals $1$ if $r=k$ and $0$ otherwise. 

Now consider the quotient \(Q_r(t) = M_r(t)/M(t)\). If \(M(t) \neq 0\), \(Q_r(t)\) is clearly analytic. At \(t = k/N\) with \(1 \le k \le N-1\), we have \(M(k/N) = 0\) but \(M(0) = 1\), so \(M(t)\) is not identically zero. Since \(M(t)\) is trigonometric, its zeros are isolated.
At $t\equiv r/N\pmod1$, the numerator is nonzero by (i). The pole is simple because the derivative of $M(t)$ at that point is nonzero. For \(t=k/N\) with \(k\neq r\), both $M(t)$ and $M_r(t)$ have a simple zero, so the singularity is removable. 
\end{proof}

\subsection{Packet coefficient identities}

The Fourier transform \(\widehat{\mu}(\lambda)\) of the self-similar Bernoulli convolution \(\mu\) is expressed as an infinite product of iterated mask factors $M(\cdot)$ according to Lemma \ref{lem:mask-product}. Acting on \(\mu\), the coordinate character packet \(W_{s,r}\) isolates the base mask \(M(\rho^s\lambda)\) from this product and substitutes it with the twisted mask \(M_r(\rho^s\lambda)\). We now present single-packet and double-packet multiplicative identities for the Fourier transforms of measures obtained by applying these packets.

\begin{lemma}\label{lem:packet-factor}
Let $s\ne u$ and $1\le r,r'\le N-1$. Away from the zero set of the denominator masks defining \(Q_r(\cdot)\) and \(Q_{r'}(\cdot)\), we have
\begin{align}
 \wh{W_{s,r}\mu}(\lambda)
 &=\wh\mu(\lambda)Q_r(\rho^s\lambda),\label{eq:one-packet}\\
 \longwh{W_{s,r}W_{u,r'}\mu}(\lambda)
 &=\wh\mu(\lambda)Q_r(\rho^s\lambda)
 Q_{r'}(\rho^u\lambda).\label{eq:two-packet}
\end{align}
\end{lemma}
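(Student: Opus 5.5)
The plan is to compute the Fourier transforms directly on the symbolic space, using independence of the coordinates under $\mathbb P$. Since $\mu=\pi_\#\mathbb P$ and $W_{s,r}$ is regarded (via Lemma~\ref{lem:injective}) as a function on $\supp\mu$ with $W_{s,r}\circ\pi$ equal to the packet on $\Omega$, the change of variables gives
\[
\wh{W_{s,r}\mu}(\lambda)=\int_\Omega \omega^{ra_s}\,\e^{-2\pi i\lambda\sum_j a_j\rho^j}\dd\mathbb P(\mathbf a).
\]
We first truncate to $\sum_{j\le n}a_j\rho^j$ with $n\ge \max(s,u)$. The truncated integrand converges uniformly to the full one because $|\e^{-2\pi i x}-\e^{-2\pi i y}|\le 2\pi|x-y|$ and the tail is at most $(N-1)\rho^{n+1}/(1-\rho)$. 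For the truncation, the integrand is a product of functions of the individual coordinates $a_1,\dots,a_n$. Independence and uniformity of the $a_j$ then factor the integral as
\[
M_r(\rho^s\lambda)\prod_{j\le n,\,j\ne s}M(\rho^j\lambda).
\]
Letting $n\to\infty$ and using the convergence in Lemma~\ref{lem:mask-product} for the remaining product, we obtain
\[
\wh{W_{s,r}\mu}(\lambda)=M_r(\rho^s\lambda)\prod_{j\ne s}M(\rho^j\lambda).
\]
The two-packet case is identical: the $s$-th factor becomes $M_r(\rho^s\lambda)$ and the $u$-th factor becomes $M_{r'}(\rho^u\lambda)$. Here $s\ne u$ is exactly what guarantees that the two twists sit on distinct independent coordinates, so they do not combine into a single factor.

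Next we reinsert the missing ordinary masks. Away from the zero sets, meaning $M(\rho^s\lambda)\ne0$ (and $M(\rho^u\lambda)\ne0$ in the two-packet case), we write $M_r(\rho^s\lambda)=Q_r(\rho^s\lambda)M(\rho^s\lambda)$ by the definition of $Q_r$ in Lemma~\ref{lem:quotient}. This restores the full product $\prod_{j\ge1}M(\rho^j\lambda)=\wh\mu(\lambda)$ and yields \eqref{eq:one-packet} and \eqref{eq:two-packet}.

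The only step needing care is the limit $n\to\infty$ for the product that omits one or two factors. A finite product of factors missing from an infinite product can be handled in one of two ways. If $M(\rho^s\lambda)\ne0$, one simply divides the convergent partial products by $M(\rho^s\lambda)$. Alternatively, the tail $\prod_{j>n}M(\rho^j\lambda)$ converges to $1$, which follows from $\sum|1-M(\rho^j\lambda)|<\infty$ in Lemma~\ref{lem:mask-product}. Either route is elementary, so no genuine obstacle is expected. It is also worth remarking that the identities extend by continuity across removable singularities of $Q_r$, although the statement only requires them off the zero set.
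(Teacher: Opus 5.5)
Your proposal is correct and follows essentially the same route as the paper: truncate to $\sum_{j\le n}a_j\rho^j$, factor the expectation by independence into $M_r(\rho^s\lambda)\prod_{j\le n,\,j\ne s}M(\rho^j\lambda)$, pass to the limit $n\to\infty$ (you use uniform convergence of the integrand where the paper uses dominated convergence, which makes no real difference), and then multiply and divide by the nonvanishing ordinary mask(s) and invoke Lemma~\ref{lem:mask-product}. You also justify explicitly that the product with one or two factors omitted converges, a point the paper leaves implicit.
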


\begin{proof}
We first prove the one-packet identity. Fix $n\ge s$ and denote $X_n=\sum_{j=1}^nA_j\rho^j$. Using the symbolic representation of $W_{s,r}$, and the independence of the random coefficients \(\{A_j\}_{j\ge1}\), we have
\begin{align}
 \mathbb E\!\left[\omega^{rA_s}\e^{-2\pi i\lambda X_n}\right]
 &=\mathbb E\!\left[
   \omega^{rA_s}\prod_{j=1}^n
   \e^{-2\pi i\lambda A_j\rho^j}\right]\notag\\
 &=\mathbb E\!\left[
   \omega^{rA_s}\e^{-2\pi i\lambda A_s\rho^s}\right]
   \prod_{\substack{1\le j\le n\\j\ne s}}
   \mathbb E\left[\e^{-2\pi i\lambda A_j\rho^j}\right]\notag\\
 &=M_r(\rho^s\lambda)
   \prod_{\substack{1\le j\le n\\j\ne s}}
   M(\rho^j\lambda).
 \label{eq:finite-one-packet}
\end{align}
As $n\to\infty$, $X_n\to X_{\rho,N}$ almost surely. The random variables on the left of \eqref{eq:finite-one-packet} have modulus one, so the dominated convergence theorem gives
\begin{equation}\label{eq:raw-one-packet}
 \wh{W_{s,r}\mu}(\lambda)
 =M_r(\rho^s\lambda)
  \prod_{j\ne s}M(\rho^j\lambda).
\end{equation}
If $M(\rho^s\lambda)\ne0$, multiplying and dividing the right-hand side by the factor $M(\rho^s\lambda)$, then we obtain
\[\widehat{W_{s,r}\mu}(\lambda)
= \left(\prod_{j=1}^\infty M(\rho^j\lambda)\right)\cdot \frac{M_r(\rho^s\lambda)}{M(\rho^s\lambda)}.\]
 Lemma~\ref{lem:mask-product} then gives $\wh{W_{s,r}\mu}(\lambda)=\wh\mu(\lambda)M_r(\rho^s\lambda)/M(\rho^s\lambda)$, which proves \eqref{eq:one-packet}.

For the two-packet formula, take $n\ge\max\{s,u\}$ to cover both indices \(s,u\) in the partial sum \(X_n\). The same independence factorization gives
\begin{align*}
 &\mathbb E\!\left[
 \omega^{rA_s+r'A_u}\e^{-2\pi i\lambda X_n}\right]\\
 &\quad=M_r(\rho^s\lambda)M_{r'}(\rho^u\lambda)
 \prod_{\substack{1\le j\le n\\j\notin\{s,u\}}}
 M(\rho^j\lambda).
\end{align*}
The dominated convergence theorem and multiplying and dividing by the two nonzero ordinary masks \(M(\rho^s\lambda)M(\rho^u\lambda)\) yield \eqref{eq:two-packet}.
\end{proof}

\section{The Frame-Measure Obstruction}\label{sec:contradiction}

We now turn to the proof of Theorem \ref{thm:main}. In order to apply Theorem \ref{thm:abstract}, we shall verify all its underlying assumptions using the quotient function analysis and coordinate packet identities established in Section~\ref{sec:packets}.


\subsection{Zero-set completion}
The quotient form of our packet identities in Lemma \ref{lem:packet-factor} conceals frequencies at which the Fourier transform \(\widehat{\mu}\) vanishes. Such zero-frequency points cannot be ignored in the single-packet identity, as the frame measure may carry nontrivial mass on these frequencies. The forthcoming lemma extends the single-packet identity to hold on the entire real frequency line, using the globally well-defined  representatives \(Q_r\) constructed in Lemma~\ref{lem:quotient}.

\begin{lemma}\label{lem:zero-completion}
Suppose that $\rho^{-m}=B\in\N$ and $N\nmid B$. Define $r_B\in\{1,\ldots,N-1\}$ by the congruence $r_B\equiv B\pmod N$. Then, for all $s>m$ and  $\lambda\in\R$,
\begin{equation}\label{eq:singleton-completion}
 \abs{\wh{W_{s,1}\mu}(\lambda)}^2
 =\abs{\wh\mu(\lambda)}^2
 \abs{Q_1(\rho^s\lambda)}^2,
\end{equation}
Moreover,
\begin{equation}\label{eq:pair-completion}
 \abs{\wh\mu(\lambda)}^2
 \abs{Q_1(\rho^s\lambda)
 Q_{r_B}(\rho^{s-m}\lambda)}^2
 \le
\abs{\longwh{W_{s,1}W_{s-m,r_B}\mu}(\lambda)}^2.
\end{equation}
\end{lemma}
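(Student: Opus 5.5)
Both statements will rest on the raw product formulas from the proof of Lemma~\ref{lem:packet-factor}. These hold for every $\lambda$ with no division:
\[
\wh{W_{s,r}\mu}(\lambda)=M_r(\rho^s\lambda)\prod_{j\ne s}M(\rho^j\lambda),
\qquad
\longwh{W_{s,r}W_{u,r'}\mu}(\lambda)=M_r(\rho^s\lambda)M_{r'}(\rho^u\lambda)\prod_{j\notin\{s,u\}}M(\rho^j\lambda).
\]
I will split into cases according to whether the relevant denominator masks vanish. If $M(\rho^s\lambda)\neq0$ (respectively $M(\rho^s\lambda)M(\rho^{s-m}\lambda)\ne0$), Lemma~\ref{lem:packet-factor} already gives \eqref{eq:singleton-completion} and \eqref{eq:pair-completion}, the latter even with equality. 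So only the zero set needs work. By Lemma~\ref{lem:quotient}(i), the zeros of $M$ are exactly the $t$ with $t\equiv k/N \pmod 1$, $1\le k\le N-1$.

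For \eqref{eq:singleton-completion}, suppose $M(\rho^s\lambda)=0$, so $\rho^s\lambda\equiv k/N$ with $k\ne0$. Then $\wh\mu(\lambda)=0$ by Lemma~\ref{lem:mask-product}, and the right side vanishes because $Q_1(\rho^s\lambda)$ is a finite number: either a removable value or the convention $0$ at the pole. It remains to show that the left side vanishes. Here the hypothesis enters. Since $s>m$, the frequency $\rho^{s-m}\lambda=B\rho^s\lambda\equiv Bk/N\pmod 1$ appears among the other factors $M(\rho^j\lambda)$, $j\ne s$.
\begin{itemize}
\item If $N\nmid Bk$, then $M(\rho^{s-m}\lambda)=0$ and the raw product vanishes.
\item If $N\mid Bk$, I iterate: $\rho^{s-2m}\lambda\equiv B^2k/N$, and so on, as long as the index stays $\ge1$.
\end{itemize}
Iteration alone is not enough. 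With $k=r$ and $Bk\equiv0\pmod N$ (possible when $\gcd(B,N)>1$), the forward orbit becomes integral and those masks equal $1$. So I also look backward at $j>s$: $\rho^{s+m}\lambda=\rho^s\lambda/B$ is $k/(NB)$ plus an integer over $B$, and I need some such factor to hit a nonzero residue class. This is the step I expect to be the main obstacle. My first attempt will be to show directly that the raw product has a vanishing factor other than index $s$ whenever $k=1$ (the pole case of $Q_1$), possibly using the fact that $\lambda$ lies in the zero set of $\wh\mu$ in a structured way. If that fails, I will settle for proving the identity $\eta_\Gamma$-a.e.\ or $\Gamma$-a.e. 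That weaker form is all Theorem~\ref{thm:abstract} needs, and on $\{\wh\mu=0\}$ with the pole, I will verify separately that $M_1(\rho^s\lambda)=1$ while the other factors vanish.

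For \eqref{eq:pair-completion}, the left side is zero whenever $\wh\mu(\lambda)=0$, and the right side is nonnegative. So the inequality is trivial on the zero set. Off that set every $M(\rho^j\lambda)\ne0$, and the equality from \eqref{eq:two-packet} with $u=s-m\ge1$, $r=1$, $r'=r_B$ applies directly. This is precisely why \eqref{eq:pair-completion} is stated only as an inequality.
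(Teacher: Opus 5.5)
\textbf{There is a genuine gap.} It sits exactly at the step you flag as the main obstacle: showing that the raw product $M_1(\rho^s\lambda)\prod_{j\ne s}M(\rho^j\lambda)$ vanishes whenever $M(\rho^s\lambda)=0$. You split on whether $N\mid Bk$ and leave the case $N\mid Bk$ open. For it you propose forward iteration, then a backward look at indices $j>s$, and finally a retreat to an a.e.\ statement. None of this is needed.

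The missing observation is Lemma~\ref{lem:quotient}(i) applied to the \emph{twisted} factor: $M_1(k/N)=0$ for every $k\in\{1,\dots,N-1\}$ with $k\ne1$.
\begin{itemize}
\item If $\rho^s\lambda\equiv k/N \pmod 1$ with $k\ne1$, the factor $M_1(\rho^s\lambda)$ itself kills the product, with no need to look at other indices.
\item The only remaining case is $k=1$, the pole of $Q_1$. There $B\rho^s\lambda\equiv B/N\equiv r_B/N\not\equiv 0\pmod 1$ because $N\nmid B$. Hence $M(\rho^{s-m}\lambda)=0$ with $1\le s-m\ne s$, which is just your first bullet specialized to $k=1$.
\end{itemize}
So every configuration with $N\mid Bk$ has $k\ne1$, where $M_1$ vanishes. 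The configuration you single out as problematic ($k=r=1$ with $Bk\equiv0\pmod N$) is excluded outright by the hypothesis $N\nmid B$.

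This is precisely the paper's argument, phrased there through the zero index set $\mathcal Z(\lambda)$. Either $\mathcal Z(\lambda)$ contains some $j\ne s$, and the product vanishes. Or $\mathcal Z(\lambda)=\{s\}$; then $k\ne1$ forces $M_1(\rho^s\lambda)=0$, while $k=1$ forces $M(\rho^{s-m}\lambda)=0$, contradicting $\mathcal Z(\lambda)=\{s\}$.

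Your fallback would not rescue the lemma either. The statement is pointwise for all $\lambda$, and an $\eta_\Gamma$-a.e.\ version is insufficient. $\eta_\Gamma$ gives no mass to $\{\wh\mu=0\}$, whereas hypothesis~(ii) of Theorem~\ref{thm:abstract} must hold $\Gamma$-a.e., and $\Gamma$ may charge the zero set of $\wh\mu$. That zero set is the entire reason the lemma is needed: the lower bound passes through $\int\abs{\wh{F_s\mu}}^2\dd\Gamma$, and any nonzero values of $\wh{F_s\mu}$ on $\{\wh\mu=0\}$ would break the equality used there. A $\Gamma$-a.e.\ version, with $\Gamma$ arbitrary, requires the same pointwise argument anyway. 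Your treatment of \eqref{eq:pair-completion} is correct and matches the paper.
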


\begin{proof}
Fix $s>m$ and $\lambda\in\R$. If $\wh\mu(\lambda)\ne0$, Lemma~\ref{lem:mask-product} shows that every ordinary mask factor \(M(\rho^j\lambda)\neq 0\) for all \(j\ge 1\). Both assertions then follow directly from Lemma~\ref{lem:packet-factor}.

 Assume now that $\wh\mu(\lambda)=0$. By the assertion of Lemma~\ref{lem:mask-product}, the zero index set $$\mathcal{Z}(\lambda)=\{j\ge1:M(\rho^j\lambda)=0\}\ne \emptyset.$$  The raw product formula \eqref{eq:raw-one-packet}, with $r=1$, reads
\begin{equation}\label{eq:singleton-product}
 \wh{W_{s,1}\mu}(\lambda)
 =M_1(\rho^s\lambda)
 \prod_{j\ne s}M(\rho^j\lambda).
\end{equation}
If $j\in\mathcal{Z}(\lambda)$ with $j\ne s$, the product on the right of \eqref{eq:singleton-product} contains the zero factor $M(\rho^j\lambda)$, and therefore \( \wh{W_{s,1}\mu}(\lambda)=0\).

The only remaining possibility is $\mathcal{Z}(\lambda)=\{s\}$. In this case, $M(\rho^s\lambda)=0$, and Lemma~\ref{lem:quotient} gives $\rho^s\lambda\equiv k/N\pmod1$ for some $k\in\{1,\ldots,N-1\}$. If $k\ne1$, then Lemma~\ref{lem:quotient}(i) gives $M_1(\rho^s\lambda)=0$, so \eqref{eq:singleton-product} again vanishes. If $k=1$, then
\begin{equation}\label{eq:zero-propagation-calculation}
 \rho^{s-m}\lambda
 =B\rho^s\lambda
 \equiv\frac{B}{N}
 \equiv\frac{r_B}{N}\pmod1.
\end{equation}
Since $N\nmid B$,  Lemma~\ref{lem:quotient} therefore implies $M(\rho^{s-m}\lambda)=0$. The index $s-m$ differs from $s$ since $m\ge1$, which contradicts the assumption $\mathcal{Z}(\lambda)=\{s\}$. In all valid subcases with \(\widehat{\mu}(\lambda)=0\), we obtain \(\widehat{W_{s,1}\mu}(\lambda)=0\). This establishes \eqref{eq:singleton-completion} for all \(\lambda\in\mathbb{R}\). 

For the pair estimate, equality follows from \eqref{eq:two-packet} when $\wh\mu(\lambda)\ne0$. When $\wh\mu(\lambda)=0$, under the global convention for the quotient \(Q_r\) at mask zeros, the product \(Q_1(\rho^s\lambda)Q_{r_B}(\rho^{s-m}\lambda)\) is finite. So the left-hand side of \eqref{eq:pair-completion} is zero, whereas the right-hand side is nonnegative. Hence the inequality holds at every $\lambda\in\R$.
\end{proof}

\subsection{Cyclic pole propagation}

We now establish the large-value propagation property near cyclic poles required for part (iv) of Theorem~\ref{thm:abstract}. The residue \(r_B\) labels the pole location obtained after scaling by integer $B$, and the following lemma establishes a uniform quantitative comparison between the quotient functions in the neighbourhoods of these cyclic poles.

\begin{lemma}\label{lem:pole-propagation}
Assume that $B\in\N$ and $N\nmid B$, and let $r_B\equiv B\pmod N$ with $1\le r_B\le N-1$. There exist constants $R_0,\kappa>0$ such that, whenever $t\not\equiv1/N\pmod1$ and $Bt\not\equiv r_B/N\pmod1$,
\begin{equation}\label{eq:pole-propagation}
 \abs{Q_1(t)}\ge R_0
 \quad\Longrightarrow\quad
 \abs{Q_{r_B}(Bt)}\ge\kappa\abs{Q_1(t)}.
\end{equation}
\end{lemma}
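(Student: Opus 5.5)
We work with the explicit closed form of the quotients from the proof of Lemma~\ref{lem:quotient}. With $z=\e^{-2\pi it}$, away from zeros of $M$ we have
\[
Q_r(t)=\frac{1-z}{1-\omega^r z}.
\]
So $Q_1(t)=(1-z)/(1-\omega z)$, and $|Q_1(t)|$ is large exactly when $\omega z$ is close to $1$, that is, when $t$ is close to $1/N$ modulo $1$. Similarly, with $w=\e^{-2\pi iBt}=z^B$, we have $Q_{r_B}(Bt)=(1-w)/(1-\omega^{r_B}w)$. The plan is to show that a large value of $Q_1(t)$ forces $Bt$ near $r_B/N$, and that the pole at $r_B/N$ has a rate comparable to the pole at $1/N$.

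First we choose $R_0$. Since $|1-z|\le 2$, the condition $|Q_1(t)|\ge R_0$ gives $|1-\omega z|\le 2/R_0$. Write $\delta=\operatorname{dist}(t-1/N,\mathbb Z)$. Using $|1-\e^{2\pi i x}|=2|\sin\pi x|$, which is comparable to $\operatorname{dist}(x,\mathbb Z)$, we get $\delta\le c/R_0$. We take $R_0$ large enough that $\delta<\delta_0$ for a small $\delta_0$ depending on $N$ and $B$. In that regime $|1-z|\ge|1-\omega^{-1}|-|1-\omega z|$ stays above a positive constant $c_1$. Hence $|Q_1(t)|$ is comparable to $1/|1-\omega z|$, and so to $1/\delta$, with constants depending only on $N$. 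The hypothesis $t\not\equiv 1/N$ means $\delta>0$, so these quotients are genuine finite values.

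Next we propagate the scaling. Write $t=1/N+\epsilon+\ell$ with $\ell\in\mathbb Z$ and $|\epsilon|=\delta$. Then $Bt\equiv B/N+B\epsilon\equiv r_B/N+B\epsilon \pmod 1$. Hence $\omega^{r_B}w=\e^{-2\pi i B\epsilon}$, and $\operatorname{dist}(Bt-r_B/N,\mathbb Z)=B\delta$ provided $B\delta_0<1/2$. The hypothesis $Bt\not\equiv r_B/N$ only excludes $B\epsilon\in\mathbb Z$, which is already impossible here. This gives $|1-\omega^{r_B}w|=2|\sin(\pi B\delta)|\le 2\pi B\delta$.

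For the numerator, $w=\e^{-2\pi i r_B/N}\e^{-2\pi iB\epsilon}$. Because $N\nmid B$, we have $\e^{-2\pi i r_B/N}\ne1$, so $|1-w|\ge|1-\omega^{-r_B}|-2\pi B\delta_0\ge c_2>0$ once $\delta_0$ is small. Consequently $|Q_{r_B}(Bt)|\ge c_2/(2\pi B\delta)$. Combining this with $|Q_1(t)|\le 2/(4\delta)=1/(2\delta)$, which uses $|1-\omega z|=2|\sin\pi\delta|\ge 4\delta$ for $\delta\le1/2$, yields $|Q_{r_B}(Bt)|\ge \kappa|Q_1(t)|$ with $\kappa=c_2/(\pi B)$.

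The main obstacle is bookkeeping rather than depth. We must check that the removable-singularity and pole conventions of Lemma~\ref{lem:quotient} do not interfere. They do not, because both points considered avoid the zeros of the relevant denominators: $t$ near $1/N$ is away from the other zeros $k/N$, and the closed formula extends continuously there. We must also fix $\delta_0$ so that all three smallness requirements hold simultaneously. The crucial use of $N\nmid B$ is that the scaled pole lands at the nonzero residue $r_B/N$, where the numerator $1-w$ stays bounded below.
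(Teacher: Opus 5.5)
Your proof is correct and follows the same route as the paper. You localize to the unique pole $1/N$, note that $Bt\equiv r_B/N+B\eps \pmod 1$, and compare the $1/\abs{\eps}$ and $1/(B\abs{\eps})$ blow-up rates, with $N\nmid B$ keeping the numerator $1-\e^{-2\pi i r_B/N}$ away from zero. The only difference is that you replace the paper's asymptotic ratio limit $L_B$ and its compactness choice of $R_0$ with explicit sine inequalities, using $\abs{1-z}\le 2$ to localize directly; this gives explicit constants and shows that the hypothesis $Bt\not\equiv r_B/N$ holds automatically.
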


\begin{proof}
Both \(Q_1(t)\) and \(Q_{r_B}(Bt)\) are periodic with period $1$: periodicity of \(Q_1\) holds by construction, while \(Q_{r_B}(B(t+1))=Q_{r_B}(Bt+B)=Q_{r_B}(Bt)\) as \(B\in\mathbb{N}\). It thus suffices to restrict analysis to the fundamental domain \(t\in[0,1]\).

The quotient $Q_1$ possesses exactly one simple pole within \([0, 1]\), located at $1/N$.  Write $t=1/N+\eps$ with $\eps\to0$ and set $\omega=\e^{2\pi i/N}$. The denominator of $Q_1$ expands as
\begin{align*}
 1-\omega\e^{-2\pi it}
 &=1-\e^{2\pi i/N}\e^{-2\pi i(1/N+\eps)}\\
 &=1-\e^{-2\pi i\eps}
 =2\pi i\eps+O(\eps^2),
\end{align*}
whereas the numerator of \(Q_1\) converges to the nonzero constant $1-\e^{-2\pi i/N}$. Consequently, the asymptotic blow-up  in the neighbourhood of the pole satisfies
\begin{equation}\label{eq:q1-asymp}
 \abs{Q_1(t)}
 \sim
 \frac{\abs{1-\e^{-2\pi i/N}}}{2\pi\abs\eps}.
\end{equation}

Next, write $B=qN+r_B$ with $q\in\N\cup\{0\}$ and $1\le r_B\le N-1$. Substituting \(t=1/N+\varepsilon\) into \(Bt\) gives 
$$Bt=q+r_B/N+B\eps.$$ By unit periodicity of $Q_{r_B}$, \(Q_{r_B}(Bt)=Q_{r_B}(r_B/N+B\varepsilon)\), so we are approaching the pole \(r_B/N\) with offset \(B\varepsilon\). The denominator of \(Q_{r_B}\)  expands as 
\begin{align*}
 1-\omega^{r_B}\e^{-2\pi iBt}
 &=1-\e^{2\pi ir_B/N}
       \e^{-2\pi i(q+r_B/N+B\eps)}\\
 &=1-\e^{-2\pi iB\eps}
 =2\pi iB\eps+O(\eps^2),
\end{align*}
while the numerator tends to $1-\e^{-2\pi iB/N}=1-\e^{-2\pi ir_B/N}\ne0$. Therefore
\begin{equation}\label{eq:qB-asymp}
 \abs{Q_{r_B}(Bt)}
 \sim
 \frac{\abs{1-\e^{-2\pi iB/N}}}{2\pi B\abs\eps}.
\end{equation}
Dividing \eqref{eq:qB-asymp} by \eqref{eq:q1-asymp} gives
\begin{equation}\label{eq:ratio-limit}
 \frac{\abs{Q_{r_B}(Bt)}}{\abs{Q_1(t)}}
 \longrightarrow
 L_B:=
 \frac{\abs{1-\e^{-2\pi iB/N}}}
 {B\abs{1-\e^{-2\pi i/N}}}>0.
\end{equation}
Choose $\delta>0$ small enough such that $ \frac{\abs{Q_{r_B}(Bt)}}{\abs{Q_1(t)}}\ge L_B/2$ whenever $0<\abs{t-1/N}<\delta$. On the compact set $K_\delta=[0,1]\setminus(1/N-\delta,1/N+\delta)$, the function $Q_1$ is continuous and therefore bounded. Choose $R_0>\max_{t\in K_\delta}\abs{Q_1(t)}$. If $\abs{Q_1(t)}\ge R_0$, periodicity forces $t$ to lie in the chosen punctured neighborhood of $1/N$ modulo one. In this neighbourhood, the uniform lower bound for the ratio holds, so we conclude \eqref{eq:pole-propagation} by setting \(\kappa=L_B/2\).
\end{proof}

\subsection{Proof of the main theorem}

Based on the preceding preparations, we establish Theorem \ref{thm:main} by checking each condition of Theorem \ref{thm:abstract} one by one.

\begin{proof}[\bf Proof of Theorem~\ref{thm:main}]
We proceed by contradiction. Suppose \(\Gamma\) is a frame measure for \(\mu\) with frame bounds \(A, C>0\). Define the positive Borel measure
 $\dd\eta(\lambda)=\abs{\wh\mu(\lambda)}^2\dd\Gamma(\lambda)$. Then $\eta(\R)\le C$ by applying the upper frame inequality to the constant function.
 
Define the common cyclic pole set by
\begin{equation}\label{eq:common-pole-set}
 \mathcal P=
 \bigcup_{j\ge1}\bigcup_{r=1}^{N-1}
 \left\{\lambda\in\R:
 \rho^j\lambda\equiv\frac rN\pmod1\right\}.
\end{equation}
For every $\lambda\in\mathcal P$, at least one ordinary mask factor \(M(\rho^j\lambda)\) vanishes, which implies that $\mathcal P\subset\{\lambda:\wh\mu(\lambda)=0\}$.  Therefore $\eta(\mathcal P)=0$. To make \(Q_r(t)\) be well-defined over the whole real line including pole points, by Lemma \ref{lem:quotient}, we adopt the following piecewise regularized definition:
\begin{equation}\label{eq:quotient-regularization}
Q_r(t)=
\begin{cases}
	\displaystyle\frac{1-e^{-2\pi i t}}{1-\omega^r e^{-2\pi i t}},& t\not\equiv \dfrac{r}{N}\pmod{1},\\[6pt]
	0,& t\equiv \dfrac{r}{N}\pmod{1}.
\end{cases}
\end{equation}
For $s>m$, set
\begin{equation}\label{eq:abstract-data}
 \begin{aligned}
 F_s=W_{s,1}, \quad G_s=W_{s,1}W_{s-m,r_B}, \quad
 q_s(\lambda)=Q_1(\rho^s\lambda), \quad r_s(\lambda)=Q_{r_B}(\rho^{s-m}\lambda).
 \end{aligned}
\end{equation}
We verify the four hypotheses required for Theorem~\ref{thm:abstract} one by one.

\smallskip
\noindent {\bf {Hypothesis (i).}}
 Note that $\abs{W_{s,r}}=1$ pointwise. Then  \(W_{s,r}\) and the product \(W_{s,r}W_{u,r'}\) satisfy $$\norm{W_{s,r}}_{L^2(\mu_{\rho,N})}=1 \qquad \norm{W_{s,r}W_{u,r'}}_{L^2(\mu_{\rho,N})}=1.$$ 
By construction, $\norm{F_s}_{L^2(\mu)}=\norm{G_s}_{L^2(\mu)}=1$, so hypothesis~(i) holds with $a=b=1$.

\smallskip
\noindent {\bf {Hypothesis (ii).}}
 Lemma~\ref{lem:zero-completion} together with the globally regularized quotient convention \eqref{eq:quotient-regularization}, gives $$\abs{\wh{F_s\mu}(\lambda)}^2=\abs{\wh\mu(\lambda)}^2\abs{q_s(\lambda)}^2, \qquad  \abs{\wh\mu(\lambda)}^2\abs{q_s(\lambda)r_s(\lambda)}^2\le\abs{\wh{G_s\mu}(\lambda)}^2.$$ Hence hypothesis~(ii) holds.

\smallskip
\noindent{\bf{Hypothesis (iii).}}
 For every fixed $\lambda$, one has $\rho^s\lambda\to0$ as \(s\to\infty\).  The explicit formula \eqref{eq:quotient-regularization} gives $Q_1(0)=0$, and $Q_1$ is continuous in the neighborhood of $0$ since $1-\omega\ne0$.  Hence $q_s(\lambda)=Q_1(\rho^s\lambda)\to0$ for every $\lambda\in\R$, and in particular for $\eta$-almost every $\lambda$. This confirms hypothesis~(iii).

\smallskip
\noindent{\bf{Hypothesis (iv).}}
 From the scaling relation $\rho^{-m}=B$, we have $\rho^{s-m}\lambda=B\rho^s\lambda$, and hence $$r_s(\lambda)=Q_{r_B}(B\rho^s\lambda).$$  For $\lambda\notin\mathcal P$, neither \(\rho^s\lambda\) nor \(B\rho^s\lambda\) lies on the pole set of the respective quotient maps, so Lemma~\ref{lem:pole-propagation} supplies constants $R_0,\kappa>0$, independent of $s$ and $\lambda$, such that 
$$\abs{q_s(\lambda)}\ge R_0\Longrightarrow\abs{r_s(\lambda)}\ge\kappa\abs{q_s(\lambda)}.$$
Since $\eta(\mathcal P)=0$, the propagation estimate holds \(\eta\)-almost everywhere, establishing hypothesis~(iv).

All four assumptions of Theorem~\ref{thm:abstract} are now satisfied. Applying Theorem~\ref{thm:abstract} yields a contradiction against the lower frame inequality for the sequence \(\{F_s\}\). Therefore, no frame measure \(\Gamma\) for \(\mu\) can exist.
\end{proof}

\section*{Declaration of Generative AI and AI-Assisted Technologies
in the Manuscript Preparation Process}

During the preparation of this manuscript, generative AI was used for
linguistic polishing of the introduction and for checking the formatting
of the reference list. All authors thoroughly reviewed, revised, and
finalized the manuscript and take full responsibility for the content
of the article.

\bigskip
\noindent\textsc{Xiao-Ye Fu}\\
Hubei Key Laboratory of Mathematical Sciences, College of Mathematics and Statistics,
Central China Normal University, Wuhan, Hubei 430079, China\\
Email address: \texttt{xiaoyefu@ccnu.edu.cn}

\medskip
\noindent\textsc{Wei-Jie Wang}\\
Hubei Key Laboratory of Mathematical Sciences, College of Mathematics and Statistics,
Central China Normal University, Wuhan, Hubei 430079, China\\
Email address: \texttt{wwjmath@163.com}

\end{document}